\numberwithin{equation}{section}
\newtheorem{prop}{Proposition}
\newtheorem{thm}{Theorem}
\newtheorem{lem}{Lemma}
\theoremstyle{definition}
\theoremstyle{remark}
\theoremstyle{definition}
\newtheorem{conjecture}{Conjecture}
\begin{document}

\title[Sharp Weak Type Estimates for a Family of Soria Bases]{Sharp Weak Type Estimates for a family of Soria bases}

\author{Dmitry Dmitrishin}
\address{D. D.: Department of Applied Mathematics, Odessa National Polytechnic University, Odessa 65044, Ukraine}
\email{\href{mailto: dmitrishin@opu.ua}{dmitrishin@opu.ua}}

\author{Paul Hagelstein}
\address{P. H.: Department of Mathematics, Baylor University, Waco, Texas 76798}
\email{\href{mailto:paul_hagelstein@baylor.edu}{paul\_hagelstein@baylor.edu}}
\thanks{P. H. is partially supported by a grant from the Simons Foundation (\#521719 to Paul Hagelstein).}

\author{Alex Stokolos}
\address{A. S.: Department of Mathematical Sciences, Georgia Southern University, Statesboro, Georgia 30460}
\email{\href{mailto:astokolos@GeorgiaSouthern.edu}{astokolos@GeorgiaSouthern.edu}}

\subjclass[2020]{Primary 42B25}
\keywords{maximal functions, differentiation basis}

\begin{abstract}
Let $\mathcal{B}$ be a collection of rectangular parallelepipeds in $\mathbb{R}^3$ whose sides are parallel to the coordinate axes and such that $\mathcal{B}$ contains parallelepipeds with side lengths of the form $s, \frac{2^N}{s} , t $, where $s, t > 0$ and $N$ lies in a nonempty subset $S$  of the natural numbers.    We show that if $S$ is an infinite set, then the associated geometric maximal operator $M_\mathcal{B}$ satisfies the weak type estimate

$$\left|\left\{x \in \mathbb{R}^3 : M_{\mathcal{B}}f(x) > \alpha\right\}\right| \leq C \int_{\mathbb{R}^3} \frac{|f|}{\alpha} \left(1 + \log^+ \frac{|f|}{\alpha}\right)^{2}$$

\noindent but does not satisfy an estimate of the form

$$\left|\left\{x \in \mathbb{R}^3 : M_{\mathcal{B}}f(x) > \alpha\right\}\right| \leq C \int_{\mathbb{R}^3} \phi\left(\frac{|f|}{\alpha}\right)$$
for any convex increasing function $\phi: \mathbb[0, \infty) \rightarrow [0, \infty)$ satisfying the condition
$$\lim_{x \rightarrow \infty}\frac{\phi(x)}{x (\log(1 + x))^2} = 0\;.$$

\end{abstract}

\maketitle
 
\section{Introduction}

This paper is concerned with sharp weak type estimates for a class of maximal operators naturally arising from work surrounding the so-called Zygmund conjecture in multiparameter harmonic analysis.  Let us recall that the \emph{strong maximal operator} $M$ is defined on $L_{\textup{loc}}^1(\mathbb{R}^n)$ by
$$Mf(x) = \sup_{x \in R} \frac{1}{|R|}\int_R |f|\;,$$
where the supremeum is over all rectangular parallelepipeds in $\mathbb{R}^n$ containing $x$ whose sides are parallel to the coordinate axes.  
 An important inequality associated to the strong maximal operator is
$$\left|\left\{x \in \mathbb{R}^n : Mf(x) > \alpha\right\}\right| \leq C_n \int_{\mathbb{R}^n} \frac{|f|}{\alpha} \left(1 + \log^+ \frac{|f|}{\alpha}\right)^{n-1}\;.$$   This inequality may be found in de Guzm\'an \cite{guzman1974, guzman} (see also the related paper \cite{cf1975} of A. C\'ordoba and R. Fefferman  as well as the paper \cite{favacapri} of Capri and Fava) and may be used to provide a proof of the classical  \emph{Jessen-Marcinkiewicz-Zygmund Theorem} \cite{jmz}, which tells us that the integral of any function in $L (\log^+L)^{n-1}(\mathbb{R}^n)$ is strongly differentiable.

Now, the strong maximal operator in $\mathbb{R}^n$ is associated to an $n$-parameter basis of rectangular parallelepipeds. It is natural to consider  weak type estimates for maximal operators in $\mathbb{R}^n$ associated to $k$-parameter bases.  The \emph{Zygmund Conjecture} in this regard is the following:

\begin{conjecture}[Zygmund Conjecture; now disproven]
Let $\mathcal{B}$ be a collection of rectangular parallelepipeds in $\mathbb{R}^n$ whose sides are parallel to the coordinate axes and whose sidelengths are of the form $$\phi_1(t_1, \ldots, t_k), \ldots, \phi_n(t_1, \ldots, t_k)$$ where the functions $\phi_i$ are nonnegative and increasing in each variable separately.  Define the associated maximal operator $M_\mathcal{B}$ by
$$M_\mathcal{B}f(x) = \sup_{x \in R \in \mathcal{B}} \frac{1}{|R|}\int_R |f|\;.$$
Then $M_\mathcal{B}$ satisfies the weak type estimate
\begin{equation}\label{e1}\left|\left\{x \in \mathbb{R}^n : M_\mathcal{B} f(x) > \alpha\right\}\right| \leq C_n \int_{\mathbb{R}^n} \frac{|f|}{\alpha} \left(1 + \log^+ \frac{|f|}{\alpha}\right)^{k-1}\;.\end{equation}
\end{conjecture}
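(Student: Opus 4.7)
The natural approach to the Zygmund Conjecture is to try to generalize the proof of the strong maximal inequality, which corresponds to $k=n$ and $\phi_i(t_1,\ldots,t_n)=t_i$. In that setting the $L(\log^+ L)^{n-1}$ bound follows from the factorization of $M_{\mathrm{strong}}$ as an iterated one-dimensional Hardy--Littlewood maximal operator, together with a C\'ordoba--Fefferman style geometric selection argument. The plan is therefore to exhibit $M_\mathcal{B}$ as a pointwise majorant of a composition of $k$ weak type $(1,1)$ maximal operators --- one for each effective parameter $t_j$ --- and then to apply the Stein-type $L\log L$ iteration to extract exactly $k-1$ logarithmic losses.

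First I would discretize: by monotonicity of each $\phi_i$ in each $t_j$, one may restrict each parameter to dyadic values $t_j \in 2^{\mathbb{Z}}$ at the cost of an absolute multiplicative constant, yielding a countable subbasis. Second, for each fixed $j \in \{1,\ldots,k\}$ I would introduce a reduced maximal operator $M^{(j)}$ obtained by averaging over rectangles in which only the $j$th parameter varies; the hope is that $M^{(j)}$ behaves like a one-parameter operator of weak type $(1,1)$. Third, I would attempt the pointwise estimate $M_\mathcal{B} f(x) \lesssim M^{(1)} \circ M^{(2)} \circ \cdots \circ M^{(k)} f(x)$, the direct analogue of $M_{\mathrm{strong}} \le M_1 \circ \cdots \circ M_n$. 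Fourth, granting this composition, the standard $L\log L$ iteration exploited by de Guzm\'an \cite{guzman1974} would pick up one extra log per stage and deliver the desired $(k-1)$-log weak type bound \eqref{e1}.

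The main obstacle, and presumably the reason this conjecture carries the annotation \emph{now disproven}, sits in Step 3. Because the sidelength functions $\phi_i$ entangle all $k$ parameters at once, the operators $M^{(j)}$ do not in general decouple the way the one-dimensional $M_j$ do for the strong maximal operator: perturbing a single $t_j$ may alter several sidelengths simultaneously, and there is no product structure to exploit. Equivalently, any C\'ordoba--Fefferman covering lemma strong enough to yield a $(k-1)$-log overlap bound must fail for sufficiently pathological monotone $\phi_i$, so the composition strategy cannot work in full generality. Bridging this gap --- at least for specially structured families $\mathcal{B}$ such as the three-dimensional hyperbolic family with sidelengths $s,\, 2^N/s,\, t$ studied in this paper --- requires replacing the generic composition argument with a quantitative geometric analysis tailored to the explicit form of the $\phi_i$, which is the route one expects the subsequent sections to take.
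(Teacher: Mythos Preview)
The statement you were asked to address is not a theorem but a \emph{disproven conjecture}, and the paper does not prove it. Immediately after stating it, the paper writes: ``This conjecture was disproven by Soria in \cite{soria}.'' There is therefore no proof in the paper to compare your proposal against.

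Your write-up correctly recognizes this: rather than offering a proof, you sketch the natural iteration strategy (factor $M_\mathcal{B}$ through $k$ one-parameter operators and apply the $L\log L$ iteration) and then identify where it fails --- the entanglement of the parameters in the sidelength functions $\phi_i$ prevents the composition estimate in your Step~3. That diagnosis is reasonable and consistent with the paper's narrative, which goes on to cite C\'ordoba's positive result for the special case $s,t,\phi(s,t)$ and Soria's Proposition~\ref{prop1} as instances where structural hypotheses on the basis salvage the $(k-1)$-log bound. So your proposal is not a proof attempt in the usual sense but an informed discussion of why no proof exists; in that capacity it is accurate, though it goes beyond what the paper itself says about the conjecture (the paper simply attributes the counterexample to Soria without analyzing where the heuristic argument breaks).
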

This conjecture was disproven by Soria in \cite{soria}.  That being said, it does hold in many important cases.  For example, A. C\'ordoba proved in \cite{cordoba} that the Zygmund Conjecture holds in the case that  $\mathcal{B}$ consists of rectangular parallelepipeds in $\mathbb{R}^3$ with sides parallel to the coordinate axes and whose sidelengths are of the form $s, t, \phi(s,t)$, where $\phi$ is nonnegative and increasing in the variables $s, t$ separately.  Of particular interest to us in this paper is the following extension of C\'ordoba's result due to Soria in \cite{soria}:

\begin{prop}\label{prop1}
Let $\mathcal{B}$ be a collection of rectangular parallelepipeds in $\mathbb{R}^3$ whose sides are parallel to the coordinate axes.  Furthermore, suppose that, given a parallelepided $R$ in $\mathcal{B}$ of sidelengths $r_1, r_2, r_3$ and another parallelepided $R'$ in $\mathcal{B}$ of sidelengths $r_1', r_2', r_3'$, if $r_1 > r_1'$, then either $r_2 > r_2'$ or $r_3 > r_3'$.  Then

$$\left|\left\{x \in \mathbb{R}^3 : M_\mathcal{B} f(x) > \alpha\right\}\right| \leq C \int_{\mathbb{R}^3} \frac{|f|}{\alpha} \left(1 + \log^+ \frac{|f|}{\alpha}\right)\;.$$
\end{prop}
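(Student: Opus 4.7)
The plan is to establish Proposition~\ref{prop1} via a C\'ordoba--Fefferman-style covering lemma: Soria's condition forces $\mathcal{B}$ to behave essentially as a two-parameter basis, and the standard paradigm giving an $L(\log L)^{k-1}$ weak-type bound should then yield the exponent $k-1=1$. After fixing $\alpha>0$ and setting $E_\alpha=\{M_{\mathcal{B}}f>\alpha\}$, I would linearize by choosing, for each $x\in E_\alpha$, a rectangle $R_x\in\mathcal{B}$ with $x\in R_x$ and $|R_x|^{-1}\int_{R_x}|f|>\alpha$, and then reduce (via a Lindel\"of-plus-Vitali argument) to proving the inequality with $|E_\alpha|$ replaced by $\bigl|\bigcup_{i=1}^N R_i\bigr|$ for a finite subfamily of these rectangles.

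The heart of the argument is the selection lemma: from $\{R_i\}$ extract a subcollection $\{\widetilde R_j\}$ satisfying
\[
\Bigl|\bigcup_i R_i\Bigr|\le C\Bigl|\bigcup_j\widetilde R_j\Bigr|
\quad\text{and}\quad
\int_{\bigcup_j\widetilde R_j}\exp\Bigl(c\sum_j\chi_{\widetilde R_j}\Bigr)\le C\Bigl|\bigcup_j\widetilde R_j\Bigr|.
\]
To build $\{\widetilde R_j\}$ I would order the $R_i$ by decreasing $r_1$ and select greedily, keeping $R_i$ only when $|R_i\cap\bigcup_{k<i}\widetilde R_k|\le\tfrac12|R_i|$. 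The key geometric input is Soria's hypothesis: $r_1>r_1'$ implies $r_2>r_2'$ or $r_3>r_3'$, so the map from $r_1$-values to $(r_2,r_3)$-pairs is anti-monotone. Consequently, within any dyadic $r_1$-band the projections of the selected rectangles onto the $(x_2,x_3)$-plane form a genuine two-parameter Zygmund family, to which the planar C\'ordoba covering lemma applies and yields exponential overlap control of the projections; a one-dimensional Hardy--Littlewood argument handles the $x_1$-direction, and a dyadic summation across $r_1$-bands promotes these to the three-dimensional exponential bound displayed above.

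Given the selection lemma, the weak-type inequality follows from Orlicz-space Young's inequality. From $\alpha|\widetilde R_j|<\int_{\widetilde R_j}|f|$ we sum to obtain
\[
\alpha\sum_j|\widetilde R_j|\le\int|f|\sum_j\chi_{\widetilde R_j}.
\]
Applying Young's inequality with the conjugate pair $\Phi(t)=t(1+\log^+ t)$ and $\Phi^*(s)\sim e^s-1$, together with the exponential integrability bound, yields
\[
\int|f|\sum_j\chi_{\widetilde R_j}\le C\int|f|\Bigl(1+\log^+\tfrac{|f|}{\alpha}\Bigr)+c'\alpha\Bigl|\bigcup_j\widetilde R_j\Bigr|,
\]
and since $\bigl|\bigcup_j\widetilde R_j\bigr|\le\sum_j|\widetilde R_j|$, choosing $c'$ sufficiently small lets us absorb the last term into the left-hand side, closing the estimate against $|E_\alpha|\le C\bigl|\bigcup_j\widetilde R_j\bigr|$.

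The main obstacle is the exponential-integrability clause of the selection lemma; the measure-of-union estimate already falls out of greedy selection, but exponential overlap requires a careful organization of the iteration by dyadic $r_1$-scales and a verification that cross-scale interactions between selected rectangles sum geometrically. Confirming that Soria's anti-monotonicity genuinely reduces the within-scale problem to the two-parameter setting is the crux of the matter; a weaker hypothesis would allow $r_1,r_2,r_3$ to vary independently and force the exponent $(\log^+)^2$, consistent with the sharpness statement of the main theorem of the paper.
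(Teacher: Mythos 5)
The first thing to note is that the paper does not prove Proposition \ref{prop1} at all: it is quoted from Soria \cite{soria}, so there is no in-paper argument to measure your proposal against. Judged on its own terms, your proposal has the correct and entirely standard outer architecture: linearization, greedy selection ordered by decreasing $r_1$, a C\'ordoba--Fefferman covering lemma with an exponential-overlap clause, and the Orlicz duality step with the conjugate pair $\Phi(t)=t(1+\log^+t)$, $\Phi^*(s)\sim e^s-1$ followed by absorption; that last step is routine and you execute it correctly. The genuine gap is that the exponential-integrability clause of the selection lemma, which carries the entire mathematical content of the proposition, is not proved, and the route you sketch for it misplaces where Soria's hypothesis enters. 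Within a fixed dyadic $r_1$-band the hypothesis gives you nothing: it only compares rectangles with $r_1>r_1'$, so inside a band the $(x_2,x_3)$-projections are arbitrary axis-parallel planar rectangles; the planar covering lemma does apply there, but this uses none of the structure of $\mathcal{B}$. All of the content lies in the cross-band interaction, about which you say only that it ``sums geometrically.'' That is precisely the step that must fail for a general three-parameter family (which satisfies only the $(\log^+)^2$ estimate), so it is exactly where the hypothesis has to be invoked --- for instance by splitting the earlier-selected rectangles meeting a given $R_j$ into the class with $r_2>r_2^{(j)}$ and the class with $r_3>r_3^{(j)}$, observing that each class is, relative to $R_j$, long in two fixed directions and hence behaves like a C\'ordoba-type family with a single free direction, and then running a sliding/Vitali argument in that free direction for each class before combining them. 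Without some such argument, what you have written is a statement of the strategy rather than a proof.

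Two smaller points. The hypothesis is a monotonicity (comparability) condition, not an ``anti-monotone'' one as you describe it: larger $r_1$ forces a larger $r_2$ or a larger $r_3$, and it is this forced comparability that collapses the third parameter. Also, the passage from $\bigl|\bigcup_i R_i\bigr|$ to $C\bigl|\bigcup_j\widetilde R_j\bigr|$ is not purely a consequence of greedy selection; it requires a quantitative maximal-function bound applied to $\chi_{\bigcup_j\widetilde R_j}$ (the known estimate for the strong maximal operator suffices here), and this should be said rather than treated as automatic.
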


Note that this proposition encompasses bases that can be quite different in character than the ones consider by C\'ordoba.   In particular, in \cite{soria} Soria mentions as an example the basis of parallelepipeds with sidelengths of the form $s, t, \frac{1}{t}$.
\\

At this point we introduce another strand of research associated to Zygmund's Conjecture.  It is natural to consider, given a translation invariant basis $\mathcal{B}$ of rectangular parallelepipeds, whether or not the \emph{sharp} weak type estimate associated to $M_\mathcal{B}$ must be of the form
$$\left|\left\{x \in \mathbb{R}^n : M_\mathcal{B} f(x) > \alpha\right\}\right| \leq C_n \int_{\mathbb{R}^n} \frac{|f|}{\alpha} \left(1 + \log^+ \frac{|f|}{\alpha}\right)^{k-1}\;$$ for some \emph{integer} $1 \leq k \leq n$.  In \cite{stokolos1988}, Stokolos proved the
 following:

\begin{prop}\label{prop2}
Let $\mathcal{B}$ be a translation invariant basis of rectangles in $\mathbb{R}^2$ whose sides are parallel to the coordinate axes.  If $\mathcal{B}$ does not satisfy the weak type $(1,1)$ estimate
$$|\{x \in \mathbb{R}^2 : M_\mathcal{B} f(x) > \alpha\}| \leq C \int_{\mathbb{R}^2} \frac{|f|}{\alpha}$$
then $M_\mathcal{B}$ satisfies the weak type estimate
$$\left|\left\{x \in \mathbb{R}^2 : M_\mathcal{B} f(x) > \alpha\right\}\right| \leq C \int_{\mathbb{R}^2} \frac{|f|}{\alpha} \left(1 + \log^+  \frac{|f|}{\alpha}\right)\;$$
but does \emph{not} satisfy a weak type estimate of the form 
$$|\{x \in \mathbb{R}^2 : M_\mathcal{B} f(x) > \alpha\}| \leq C \int_{\mathbb{R}^2} \phi\left(\frac{|f|}{\alpha}\right)$$
for any nonnegative convex increasing function $\phi$ such that $\phi(x) = o(x \log x)$ as x tends to infinity.
\end{prop}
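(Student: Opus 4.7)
The plan has two parts. The upper bound is immediate from the fact that $M_\mathcal{B}$ is dominated pointwise by the full strong maximal operator $M$ on $\mathbb{R}^2$, for which the $n=2$ case of the de Guzm\'an / C\'ordoba--Fefferman inequality stated in the introduction is precisely the asserted weak $L(\log^+L)$ bound. Hence the content of the proposition lies entirely in the sharpness assertion.

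For the sharpness claim, the first step is a structural reduction: the hypothesis (failure of weak $(1,1)$) forces $\mathcal{B}$ to contain rectangles of \emph{arbitrarily large eccentricity}. Suppose to the contrary that every $R \in \mathcal{B}$ has side-length ratio at most some fixed $M$. Then any $R \in \mathcal{B}$ with sides $a \le b$ sits inside an axis-parallel square $Q_R$ of sidelength $b$, and $|Q_R|/|R| = b/a \le M$; hence for $x \in R$,
\[
\frac{1}{|R|}\int_R|f| \le \frac{|Q_R|}{|R|} \cdot \frac{1}{|Q_R|}\int_{Q_R}|f| \le M\, M_{\mathrm{HL}}f(x),
\]
where $M_{\mathrm{HL}}$ is the Hardy--Littlewood maximal operator over axis-parallel squares. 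Taking the sup over $R \ni x$ yields $M_\mathcal{B}f \le M\, M_{\mathrm{HL}}f$ pointwise, and since $M_{\mathrm{HL}}$ is of weak type $(1,1)$ so is $M_\mathcal{B}$, contradicting the hypothesis.

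With unbounded eccentricity in hand, the second step is to build explicit counterexamples. For each large $N$, choose $R_1, \dots, R_N \in \mathcal{B}$ whose eccentricities grow at least geometrically in $k$, and (up to relabeling the axes) assume each $R_k$ is wide and short. Using translation invariance, I would execute a Perron-tree / rising-sun construction: at each scale $k = 1, \dots, N$, place axis-parallel translates of $R_k$ in a tree-like arrangement so that a small ``leaf'' set $E_N$ has the property that every translate of each $R_k$ used in the construction carries an $M_\mathcal{B}\chi_{E_N}$ average of at least $c/N$. The target estimate is
\[
\bigl|\{M_\mathcal{B}\chi_{E_N} > c/N\}\bigr| \ge c\,N\,(\log N)\,|E_N|.
\]
Feeding $f = \chi_{E_N}$ and $\alpha = c/N$ into the hypothetical inequality $|\{M_\mathcal{B}f > \alpha\}| \le C\int \phi(|f|/\alpha)$ gives $c\,N\log N\,|E_N| \le C\,\phi(N/c)\,|E_N|$, i.e.\ $\phi(N/c) \gtrsim N\log N$, which contradicts $\phi(x) = o(x\log x)$ upon letting $N \to \infty$.

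The main obstacle is executing the Perron-tree / rising-sun construction under the hypothesis of translation invariance alone: the rectangles $R_k$ have fixed, possibly incommensurable, dimensions, so one cannot rescale freely and the counterexample must be assembled using only axis-parallel translates of the given $R_k$'s. This is the combinatorial core of Stokolos's 1988 argument, where the doubling trick of the Kakeya / Perron construction is adapted to axis-parallel rectangles of geometrically growing eccentricity to produce a factor of $\log N$ gain in the size of the level set of $M_\mathcal{B}\chi_{E_N}$ relative to $|E_N|$.
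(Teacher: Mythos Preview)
The paper does not actually prove Proposition~\ref{prop2}; it is quoted as a result of Stokolos (1988) and merely stated, with the remark that ``the proof of Stokolos' result is very delicate and involves the idea of \emph{crystallization} that we will return to.'' So there is no in-paper proof to compare against directly. What the paper does supply, in Section~2, is a version of that crystallization machinery (the Rademacher sets $Y_k$ and the crystals $Q_k=Y_k\times Y_k$, together with Lemma~\ref{l1}), deployed for the three-dimensional Theorem~\ref{t1}. In two dimensions, Lemma~\ref{l1} already gives exactly the quantitative estimate you are aiming for: $\mu_2(\{M_k\chi_{Q_k}\ge 2^{-k}\})\gtrsim k\,2^{k}\,\mu_2(Q_k)$, i.e.\ the $N\log N$ gain with $N\sim 2^{k}$.

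Your upper bound and the ``bounded eccentricity $\Rightarrow$ weak $(1,1)$'' reduction are correct. The genuine gap is in the sharpness construction. You label it a ``Perron-tree / rising-sun'' argument, but Perron trees are for rotated rectangles (Kakeya-type) and the rising-sun lemma is a one-dimensional stopping-time device; neither is the mechanism here. Stokolos's construction, and the one the paper exhibits, is the crystallization via Rademacher functions: from rectangles of rapidly increasing eccentricity one builds the self-similar set $Q_k$ and produces, for each scale $j$, a large family of pairwise disjoint translates of a single rectangle on which the average of $\chi_{Q_k}$ is $2^{-k}$, the unions over different $j$ being at most half-nested. That combinatorics is what yields the extra factor of $k$.

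More importantly, your sketch explicitly concedes its own gap: you say the ``main obstacle'' is carrying out the construction using only translates of the fixed $R_k$ and that this ``is the combinatorial core of Stokolos's 1988 argument''---and then stop. That core is the entire content of the proposition beyond the trivial upper bound; pointing back to Stokolos is just re-citing the result, not proving it. To make this a proof you would need to actually build $E_N$ (e.g.\ the crystal $Q_k$) and verify the level-set lower bound, adapting the rectangles in $\mathcal{B}$---which have fixed, possibly non-dyadic dimensions---to the Rademacher scaffolding.
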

In essence, this proposition tells us that, if $\mathcal{B}$ is a translation invariant basis of rectangles in $\mathbb{R}^2$ whose sides are parallel to the coordinate axes, then the optimal weak type estimate for $M_\mathcal{B}$ must be inequality \ref{e1} for $k = 1$ or $k = 2$. Optimal weak type estimates of this form when, say, $k = \frac{3}{2}$ are ruled out.  The proof of Stokolos' result is very delicate and involves the idea of \emph{crystallization} that we will return to.

It is of interest that Proposition \ref{prop2} has at the present time never been extended to encompass translation invariant bases consisting of (some, but not all) rectangular parallelepipeds in dimensions 3 or higher.  In particular, one might expect that the optimal weak type estimate for the maximal operator associated to such a  basis of parallelepipeds in $\mathbb{R}^3$ would be inequality \ref{e1} when $n = 3$ and $k$ is either $1$, $2$, or $3$.
\\

The purpose of this paper is, motivated by Propositions \ref{prop1} and \ref{prop2} above, to consider sharp weak type estimates associated to the translation invariant basis of rectangular parallelepipeds in $\mathbb{R}^3$ whose sides are parallel to the coordinate axes and whose sidelengths are of the form $s, \frac{2^N}{s} , t $, where $s, t > 0$ and $N$ lies in a nonempty subset $S$ of the natural numbers.  The end result, although not its proof, is strikingly straightforward and is stated as follows:

\begin{thm}\label{t1}
Let $\mathcal{B}$ be a collection of rectangular parallelepipeds in $\mathbb{R}^3$ whose sides are parallel to the coordinate axes and such that $\mathcal{B}$ contains all parallelepipeds with side lengths of the form $s, \frac{2^N}{s} , t $, where $s, t > 0$ and N lies in a nonempty subset $S$  of the natural numbers. 

If $S$ is a finite set, then the associated geometric maximal operator $M_\mathcal{B}$ satisfies the weak type estimate of the form

\begin{equation} \label{e2}\left|\left\{x \in \mathbb{R}^3 : M_{\mathcal{B}}f(x) > \alpha\right\}\right| \leq C \int_{\mathbb{R}^3} \frac{|f|}{\alpha}\left(1 + \log^+ \frac{|f|}{\alpha}\right)\;\end{equation}
but does not satisfy an estimate of the form

$$\left|\left\{x \in \mathbb{R}^3 : M_{\mathcal{B}}f(x) > \alpha\right\}\right| \leq C \int_{\mathbb{R}^3} \phi\left(\frac{|f|}{\alpha}\right)$$
for any convex increasing function $\phi: \mathbb[0, \infty) \rightarrow [0, \infty)$ satisfying the condition
$$\lim_{x \rightarrow \infty}\frac{\phi(x)}{x (\log(1 + x))} = 0\;.$$
\\

If $S$ is an infinite set, then the associated geometric maximal operator $M_\mathcal{B}$ satisfies a weak type estimate of the form

$$\left|\left\{x \in \mathbb{R}^3 : M_{\mathcal{B}}f(x) > \alpha\right\}\right| \leq C \int_{\mathbb{R}^3} \frac{|f|}{\alpha} \left(1 + \log^+ \frac{|f|}{\alpha}\right)^{2}$$

\noindent but does not satisfy an estimate of the form

$$\left|\left\{x \in \mathbb{R}^3 : M_{\mathcal{B}}f(x) > \alpha\right\}\right| \leq C \int_{\mathbb{R}^3} \phi\left(\frac{|f|}{\alpha}\right)$$
for any convex increasing function $\phi: \mathbb[0, \infty) \rightarrow [0, \infty)$ satisfying the condition
$$\lim_{x \rightarrow \infty}\frac{\phi(x)}{x (\log(1 + x))^2} = 0\;.$$

\end{thm}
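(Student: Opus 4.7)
The plan is to treat the upper and lower bounds separately in the two regimes.

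\textbf{Upper bounds.} For the finite case, decompose $\mathcal{B}=\bigcup_{N\in S}\mathcal{B}_N$ and observe that, after reordering the sidelengths of parallelepipeds in $\mathcal{B}_N$ as $(t,s,2^{N}/s)$, each piece $\mathcal{B}_N$ satisfies the hypothesis of Soria's Proposition~\ref{prop1} (up to the trivial degeneracy $s=s'$, in which the two rectangles differ only in their first coordinate and are thus jointly controlled by the one-dimensional Hardy--Littlewood maximal operator). This yields the weak $L\log L$ estimate for each $M_{\mathcal{B}_N}$, and finiteness of $S$ passes this bound to $M_\mathcal{B}$. For the infinite case the desired weak $L(\log L)^{2}$ estimate is immediate from the pointwise inequality $M_\mathcal{B}\le M_\mathrm{str}^{(3)}$, the full 3D strong maximal operator, together with the classical Jessen--Marcinkiewicz--Zygmund inequality in $\mathbb{R}^{3}$.

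\textbf{Sharpness when $S$ is finite.} It suffices to work at a single scale $N\in S$. For test functions of the product form $f(x_{1},x_{2},x_{3})=g(x_{1},x_{2})\mathbf{1}_{[0,1]}(x_{3})$, averaging over parallelepipeds $R_{0}\times[0,1]$ with $|R_{0}|=2^{N}$ reduces the question to a planar one: no Orlicz estimate with $\phi(x)=o(x\log x)$ can control the two-dimensional maximal operator over rectangles of area $2^{N}$. An anisotropic dilation converts this to the standard 2D strong maximal, and the classical Jessen--Marcinkiewicz--Zygmund sharpness construction (of which Stokolos' Proposition~\ref{prop2} is a far-reaching refinement) supplies the desired obstruction.

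\textbf{Sharpness when $S$ is infinite.} This is the core technical step. Following the \emph{crystallization} strategy Stokolos used in the proof of Proposition~\ref{prop2}, for each large $n$ I would pick $N_{1}<N_{2}<\cdots<N_{n}$ from $S$ and build a test function $f_{n}$ as a carefully arranged superposition of indicator functions of parallelepipeds drawn from $\mathcal{B}$ at these distinct scales. The configuration should be engineered so that maximal averages cooperate across the three now genuinely free parameters $s$, $t$ and $N$ to generate a three-dimensional cross-shaped level set of Jessen--Marcinkiewicz--Zygmund type: $|\{M_\mathcal{B} f_{n}>\alpha\}|\gtrsim n^{2}\|f_{n}\|_{1}/\alpha$ while $\|f_{n}\|_{\infty}/\alpha\sim 2^{n}$. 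Comparing this against any convex $\phi$ satisfying the stated growth hypothesis then forces $\phi(x)\ge cx(\log x)^{2}$ for large $x$, the desired contradiction.

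\textbf{Main obstacle.} The delicate point is precisely this last construction. Two of the three sidelengths, $s$ and $2^{N}/s$, are rigidly coupled, so one cannot freely realize the arbitrary parallelepipeds appearing in the standard 3D Jessen--Marcinkiewicz--Zygmund sharpness example. What rescues the argument is that with $S$ infinite the discrete parameter $N$ supplies a genuine third degree of freedom, letting $2^{N}$ range dyadically. I expect the explicit construction to take the form of a nested, Besicovitch-type arrangement (in the spirit of Soria's example in \cite{soria}) combined with a Stokolos-type crystallization lemma that converts the discrete covering geometry into the integral inequality $\phi(x)\gtrsim x(\log x)^{2}$. Making these two ingredients cohere quantitatively, while maintaining the coupling $r_{1}r_{2}\in\{2^{N}\}_{N\in S}$, is where the real work of the paper lies.
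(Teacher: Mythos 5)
Your upper bounds and your treatment of the finite case essentially coincide with the paper's: for $S$ finite the paper likewise reduces to Soria's operator over parallelepipeds of sidelengths $s, 1/s, t$ and cites \cite{soria} for both the $L\log L$ bound and its sharpness, and for $S$ infinite the $L(\log L)^2$ bound is obtained exactly as you say, by domination by the three-dimensional strong maximal operator. One quibble: in the finite case no anisotropic dilation turns the one-parameter family of planar rectangles of fixed area $2^N$ into the full two-parameter strong basis, so the ``standard JMZ sharpness construction'' is not directly available; the correct (and easy) route is to test on a small square and use the hyperbolic region $\bigcup_{\delta \le a \le 2^N/\delta}\,[0,a]\times[0,2^N/a]$, which is Soria's computation. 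The conclusion you want is true, but not by the mechanism you give.

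The genuine gap is in the infinite case. You correctly identify the target estimate $\left|\left\{M_\mathcal{B} f > \alpha\right\}\right| \gtrsim k^2 \|f\|_1/\alpha$ with $\|f\|_\infty/\alpha \sim 2^k$, and you correctly diagnose the obstacle (the rigid coupling $r_1 r_2 = 2^N$), but the construction that overcomes it is deferred with ``I would pick,'' ``should be engineered,'' and ``I expect'' --- and that construction is the entire content of the paper. Concretely, the paper chooses $m_1 < m_2 < \cdots$ in $S$ with $m_{j+1} \ge 2 m_j$, builds the Rademacher-type set $Y_k = \{t \in [0,2^{m_k}] : \sum_{j=1}^k r_0(t/2^{m_j}) = k\}$ and the planar crystal $Q_k = Y_k \times Y_k$, and proves (Lemma \ref{l1}) that for each admissible $j$ there is a family of pairwise a.e.\ disjoint rectangles of area $2^{m_{k-j+1}}$, of total measure $2^{2m_k-k}$, on each of which $\chi_{Q_k}$ has average exactly $2^{-k}$, these families overlapping at most by half as $j$ varies; this yields one factor of $k$. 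The test function is then the indicator of the single set $Z_k = Q_k \times [0,1]$ (not a superposition of parallelepipeds), and the second factor of $k$ comes from the third coordinate: for each $1 \le r \le k$ the planar rectangles of area $2^{m_r}$ supplied by the crystal at scale $r$, crossed with $[0,2^{k-r}]$, belong to $\mathcal{B}$ and give averages $\ge 2^{-k}$ on the pairwise disjoint slabs $[0,2^{m_k}]^2 \times [2^{k-r-1},2^{k-r}]$, and summing over $r$ produces the $k^2$. Without this (or an equivalent) explicit mechanism, your proposal states the problem rather than solving it.
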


The remainder of the paper is devoted to a proof of this theorem.  Note that for inequality \ref{e2}, it is easily seen that the constant $C$ is at most linearly dependent on the number of elements in $S$, although the sharp dependence of $C$ on the number of elements of $S$ is potentially a quite difficult issue that we do not treat here.  The primary content of the above theorem is the sharpness of the weak type estimate of $M_\mathcal{B}$ in the case that $S$ is infinite.  In harmonic analysis we typically  show that an optimal  weak type estimate on a maximal operator is sharp by testing the operator on a bump function or the characteristic function of a small interval or rectangular parallelepiped.  This can be done, for instance, with the Hardy-Littlewood maximal operator, the strong maximal operator, or even the maximal operator associated to rectangles whose sides are parallel to the axes with sidelengths of the form $t, \frac{1}{t}$ \cite{soria}.  However, in dealing with maximal operators associated to rare bases of the type featured in Theorem \ref{t1}, such simple functions \emph{do not} provide  examples illustrating the sharpness of the optimal weak type results, and more delicate constructions such as will be seen here are needed.
\\

We remark that a recent paper of D'Aniello and Moonens \cite{dm2017} also treats the subject of translation invariant rare bases; in particular they provide  sufficient conditions on a rare basis $\mathcal{B}$ for the estimate \ref{e1} to be sharp when $k=n$.   However, certain bases covered in Theorem \ref{t1} (such as when $S = \{2^{m^m}: m \in \mathbb{N}\}$) do not fall into the scope of those considered in their paper, although the interested reader is strongly encouraged to consult it. 
\\

{\bf{Acknowledgment:}}  We wish to thank Ioannis Parissis as well as the referees for their helpful comments and suggestions regarding this paper.

\section{Crystallization and Preliminary Weak Type Estimates}

In this section, we shall introduce a collection of two-dimensional ``crystals'' that we will use to prove Theorem \ref{t1}.  We remark that similar types of crystalline structures were used by Stokolos in \cite{stokolos1988, stokolos2005, stokolos2006} as well as by Hagelstein and Stokolos in \cite{hs2011}.
\\

Let $m_1 < m_2 < \cdots$ be an increasing sequence of natural numbers.  We may associate to this sequence and any $k \in \mathbb{N}$ a set in $[0, 2^{m_k}]$ denoted by $Y_{\left\{m_j\right\}_{j=1}^k}$  defined by

$$Y_{\left\{m_j\right\}_{j=1}^k} = \left\{t \in [0, 2^{m_k}]:   \sum_{j=1}^k r_{0}\left(\frac{t}{2^{m_{j}} }\right) = k\right\}\;.$$ 
Here $r_0(t)$ denotes the standard Rademacher function defined on $[0,1)$ by

$$r_0(t) = \chi_{[0,\frac{1}{2}]}(t) - \chi_{(\frac{1}{2}, 1)}(t)\;$$ and extended to be $1$-periodic on $\mathbb{R}$.

Note that
$$\mu_1(Y_{\left\{m_j\right\}_{j=1}^k}) = 2^{-k} 2^{m_k}\;.$$ 
Associated to the set $Y_{\left\{m_j\right\}_{j=1}^k}$ is the \emph{crystal} $Q_{\left\{m_j\right\}_{j=1}^k} \subset [0, 2^{m_k}] \times [0, 2^{m_k}]$ defined by
$$Q_{\left\{m_j\right\}_{j=1}^k} = Y_{\left\{m_j\right\}_{j=1}^k} \times Y_{\left\{m_j\right\}_{j=1}^k}\;.$$  Note
$$\mu_2(Q_{\left\{m_j\right\}_{j=1}^k}) = 2^{-2k} 2^{2m_k}\;.$$
Here $\mu_j$ refers to the Lebesgue measure on $\mathbb{R}^j$.  

We also associate to $\{m_j\}_{j=1}^k$ the geometric maximal operator $M_{\left\{m_j\right\}_{j=1}^k}$ defined on $L_{loc}^1(\mathbb{R}^2)$ by
$$M_{\left\{m_j\right\}_{j=1}^k} f(x) = \sup_{x \in R}\frac{1}{|R|}\int_R |f|\;,$$
where the supremum is over all rectangles in $\mathbb{R}^2$ containing $x$ whose sides are parallel to the coordinate axes with areas in the set $\{2^{m_1}, \ldots, 2^{m_k}\}$.

 In the case that the context is clear, we may refer to the set $Y_{\left\{m_j\right\}_{j=1}^k}$ simply as $Y_k$, the set $Q_{\left\{m_j\right\}_{j=1}^k}$ simply as $Q_k$, and the maximal operator $M_{\left\{m_j\right\}_{j=1}^k}$ simply as $M_k$.

A few basic observations regarding the sets $Y_k$ and $Q_k$ are in order.

First, note that $Y_{k + 1}$ is a disjoint union of $\frac{2^{m_{k+1} - 1}}{2^{m_k}}$ copies of $Y_{k}$.  In fact, defining the translation $\tau_s E$ of a set $E$ in $\mathbb{R}$ by
$\chi_{\tau_s E}(x) = \chi_E (x - s)$, we have
$$Y_{k + 1} = \bigcup_{l = 0}^{\frac{2^{m_{k+1} - 1}}{2^{m_k}} -1} \tau_{l 2^{m_k}} Y_{k}\;.$$   Furthermore, by induction we see that if $1 \leq r \leq k$ we have $Y_{k + 1}$ is a disjoint union of

$$\frac{2^{m_{k+1} - 1}}{2^{m_k}} \cdot \frac{2^{m_{k} - 1}}{2^{m_{k-1}}}\cdots \frac{2^{m_{r+1} - 1}}{2^{m_{r}}} = 2^{m_{k+1} - m_r - k + r - 1} $$
 copies of $Y_{r}$\;, with
 
  $$Y_{k + 1} = \bigcup_{(l_r, \ldots, l_k) \atop 0 \leq l_i \leq 2^{m_{i+1} - m_i - 1} -1}\tau_{l_{r}2^{m_r}}\tau_{l_{r+1}2^{m_{r+1}}}\cdots \tau_{l_k 2^{m_{k}}}Y_{r} \;.$$
  
  We also remark that the average of $\chi_{Y_{k}}$ over $[0, 2^{m_j}]$ is exactly $2^{-j}$, and moreover the average of $\chi_{Y_{k}}$ over any translate   $\tau_{l_{j}2^{m_j}}\tau_{l_{j+1}2^{m_{j+1}}}\cdots \tau_{l_{k-1} 2^{m_{k-1}}}[0, 2^{m_j}]$ with \mbox{ $0 \leq l_i \leq 2^{m_{i+1} - m_i - 1} -1$} is also $2^{-j}$\;.  Observe that the number of such translates is
 $$2^{m_{j+1} - m_j - 1} \cdot 2^{m_{j+2} - m_{j+1} - 1} \cdots 2^{m_{k} - m_{k-1} - 1} = \; 2^{m_k - m_j +j - k}.$$

We now consider how $M_{k}$ acts on $\chi_{Q_{k}}$.  We will do so in the special case that, for $1 \leq j \leq \frac{k}{2}$ we have that $m_{k-j} \leq m_{k - j + 1} - m_j$.  (This will  be the case if the $m_j$ increase rapidly in $j$, for example if $m_{j+1} \geq 2 m_j$ for all $j$.)  

Fix now $1 \leq j \leq \frac{k}{4}$.  We are going to show that there exist
$$2^{m_k - m_{k - j + 1} + m_j - j} \cdot 2^{m_k - m_j - k + j}\;=\;2^{2m_k - m_{k-j+1} - k}$$ pairwise a.e. disjoint rectangles with sides parallel to the coordinate axes in $[0, 2^{m_k}] \times [0, 2^{m_k}]$ whose areas are all $2^{m_{k-j+1}}$ and such that the average of $\chi_{Q_{k}}$ over each of these rectangles is  $2^{-k}$.  Moreover, each of these rectangles will be a translate of $[0, 2^{m_j}] \times [0, 2^{m_{k - j + 1} - m_j}]$.   Accordingly, the measure of the union of these rectangles will be $2^{2m_k - k}$.

We have already indicated above that the average of $\chi_{Y_{k}}$
over each of  $2^{m_k - m_j -k + j}$  pairwise a.e. disjoint translates of $[0, 2^{m_j}]$ is $2^{-j}$.  Somewhat more technically, we now need to prove that the average of $\chi_{Y_{k}}$ over $2^{m_k - m_{k - j + 1} + m_j - j}$ pairwise a.e. disjoint intervals of length $2^{m_{k - j + 1} - m_j}$ is  equal to $2^{j-k}$.

Note that the average of $\chi_{Y_{k}}$ over $[0, 2^{m_{k-j}}]$ is $2^{j-k}$ as well as any  translate $\tau[0, 2^{m_{k-j}}]$ of this interval where $\tau$ is of the form $l \cdot 2^{m_{k-j}}$ for $0 \leq l \leq 2^{m_{k - j + 1} - m_j - m_{k-j} } - 1$\;.   The union of these intervals is the interval $I := [0, 2^{m_{k - j + 1} - m_j}]$ over which the average of $\chi_{Y_k}$ is $2^{j-k}$.   It is especially important to recognize here that $$Y_{k} \cap [0, 2^{m_{k - j + 1} - 1}] = \bigcup_{i = 0}^{2^{m_{k - j + 1} - m_{k - j} - 1}}
 \tau_{i2^{m_{k-j}}}Y_{k-j}\;,$$ where the latter is a pairwise a.e. disjoint union. It is here that we need the condition that $m_{k - j} \leq m_{k - j + 1} - m_j$, so that $[0, 2^{m_{k - j + 1} - m_j}]$ 
 can be tiled by pairwise a.e. disjoint intervals of length $2^{m_{k-j}}$ over which the average of $\chi_{Y_{k - j}}$ is $2^{j - k}$.
 
 Now, $[0, 2^{m_{k}}]$ contains many pairwise a.e. disjoint translates of $I \cap Y_{k}$, each of whom being contained in a collection of translates of $I$ that are themselves pairwise a.e. disjoint; we count them here.  The number of translates is the number of pairwise a.e. disjoint translates of $I$ whose union is the left half of $[0, 2^{m_{k - j + 1}}]$ (which is $2^{m_{k - j + 1} -1 - m_{k-j+1} + m_j} = 2^{m_j - 1}$) times the number of translates of $Y_{{k - j + 1}}$ needed to form $Y_{k}$ (which is $2^{m_k - m_{k - j + 1} - k + (k - j + 1)} = 2^{m_k - m_{k - j + 1} - j + 1}$.)   Hence the total number of translates is
 $$2^{ m_j - 1} \cdot  2^{m_k - m_{k - j + 1} - j + 1} = 2^{m_j + m_k - m_{k - j + 1} - j}\;.$$

 Hence, $Y_{k}$ contains $2^{m_j + m_k - m_{k - j + 1} - j}$ pairwise a.e. disjoint intervals of length $2^{m_{k - j + 1} - m_j}$ over each of which the average of $\chi_{Y_{k}}$ is $2^{j-k}$.
 As we have already shown that the average of $\chi_{Y_{k}}$
over each of  $2^{m_k - m_j -k + j}$ pairwise a.e. disjoint translates of $[0, 2^{m_j}]$ is $2^{-j}$, we have then that there exist $ 2^{m_j + m_k - m_{k - j + 1} - j} \cdot 2^{m_k - m_j -k + j} = 2^{2m_k - m_{k-j+1} - k}$ pairwise a.e. disjoint rectangles in $[0, 2^{m_k}] \times [0, 2^{m_k}]$ of size $2^{m_{k - j + 1} - m_j}\cdot 2^{m_j} = 2^{m_{k - j + 1}}$     over each of which the average of $\chi_{Q_{k}}$ is 
$2^{-j} \cdot 2^{j-k} = 2^{-k}$.   Note the measure of the union of these rectangles is
$$2^{2m_k - m_{k-j+1} - k}  \cdot 2^{m_{k - j + 1}} = 2^{2m_k - k}\;.$$
 
We come now to a crucial observation.  By the construction of $Y_{k}$, any dyadic interval of length $2^{m_j}$ is at most only half filled by the translates of intervals of length $2^{m_{j-1}}$ such that the union of those translates acting on $Y_{{j-1}}$ is $Y_j\;.$  Accordingly, the union of the above  $2^{2m_k - m_{k-j+1} - k}$ pairwise a.e. disjoint rectangles in $[0, 2^{m_k}] \times [0, 2^{m_k}]$  of size $2^{m_{k - j + 1}}$ over each of which the average of $\chi_{Q_k}$ is $2^{-k}$ is at most only half filled by the corresponding set of rectangles of size $2^{m_{k - (j-1) + 1}}$.  Hence the union of all the rectangles $R$ in $[0, 2^{m_k}] \times [0, 2^{m_k}]$ whose sides are parallel to the coordinate axes and of area in the set $\{2^{m_{k - j}} : j = 1, \ldots, \lceil \frac{k}{4}\rceil\}$ and such that the average of $\chi_{Q_k}$ over $R$ is greater than or equal to $2^{-k}$ must exceed
$\frac{1}{2} \cdot \frac{k}{4} \cdot 2^{2m_k - k} = \frac{k}{8}2^{2m_k - k}\;.$
 \\
 
 This series of observations leads to the proof of the following lemma.
 
 \begin{lem}\label{l1}
 Let the geometric maximal operator $M_{\left\{m_j\right\}_{j=1}^k}$ and the set $ Q_{\left\{m_j\right\}_{j=1}^k}$ be defined as above. Suppose for $1 \leq j \leq \frac{k}{2}$ we have that $m_{k-j} \leq m_{k - j + 1} - m_j$.  Then
 
 $$\mu_2\left(\left\{x \in [0, 2^{m_k}]\times[0, 2^{m_k}] : M_{\left\{m_j\right\}_{j=1}^k} \chi_{Q_{\left\{m_j\right\}_{j=1}^k}}(x) \geq 2^{-k}\right\}\right) \geq \frac{k}{8}2^{2m_k - k} = \frac{1}{8}\frac{k}{2^{-k}}\mu_2\left(Q_{\left\{m_j\right\}_{j=1}^k}\right)\;.$$

 \end{lem}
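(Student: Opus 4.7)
The plan is to consolidate the pointwise and measure-theoretic bookkeeping already done in the paragraphs preceding the statement. For each index $j$ with $1 \le j \le \lceil k/4\rceil$ the preceding construction produces an explicit family $\mathcal{R}_j$ of $2^{2m_k - m_{k-j+1}-k}$ pairwise a.e. disjoint axis-parallel rectangles in $[0,2^{m_k}]^2$, each of area $2^{m_{k-j+1}}\in\{2^{m_1},\ldots,2^{m_k}\}$, on which the mean of $\chi_{Q_k}$ equals exactly $2^{-k}$. In particular every point of every rectangle in $\mathcal{R}_j$ lies in the level set
$$L_k := \bigl\{x\in[0,2^{m_k}]^2 : M_{\{m_j\}_{j=1}^k}\chi_{Q_k}(x)\ge 2^{-k}\bigr\}.$$
Writing $E_j$ for the union of the rectangles in $\mathcal{R}_j$, one already has $\mu_2(E_j)=2^{2m_k-k}$ and $\bigcup_{j=1}^{\lceil k/4\rceil} E_j \subset L_k$; the hypothesis $m_{k-j}\le m_{k-j+1}-m_j$ is precisely what makes the tilings underlying $\mathcal{R}_j$ compatible.

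It therefore suffices to lower-bound $\mu_2(\bigcup_j E_j)$. Here the key input is the ``half-filling'' observation isolated just above the lemma: by the Rademacher definition of $Y_\ell$, a dyadic interval of length $2^{m_\ell}$ is at most half occupied by the translates of length-$2^{m_{\ell-1}}$ intervals whose union forms $Y_\ell$. Tensoring this bound in both coordinates gives that $E_{j-1}$ covers at most half of $E_j$, and hence, by induction on $j$,
$$\mu_2\Bigl(E_j \setminus \bigcup_{i<j} E_i\Bigr)\ge \tfrac{1}{2}\mu_2(E_j) = \tfrac{1}{2}\cdot 2^{2m_k-k}.$$
Summing over the $\lceil k/4\rceil$ admissible values of $j$ gives
$$\mu_2\Bigl(\bigcup_{j=1}^{\lceil k/4\rceil} E_j\Bigr) \ge \tfrac{k}{8}\cdot 2^{2m_k-k},$$
and the rewrite $\tfrac{k}{8}2^{2m_k-k}=\tfrac{1}{8}\tfrac{k}{2^{-k}}\mu_2(Q_k)$ is immediate from $\mu_2(Q_k)=2^{2m_k-2k}$.

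The main obstacle is making the halving step fully rigorous. One must confirm that the rectangles at scale $j-1$ sit \emph{inside} (and fill at most half of) the rectangles at scale $j$, rather than lying anywhere else, and that the two tensor factors can be controlled independently. This uses no ingredient beyond the combinatorics already exhibited, but requires careful indexing of the translates $\tau_{l_r 2^{m_r}}\cdots\tau_{l_k 2^{m_k}}Y_r$ in both coordinates; the hypothesis $m_{k-j}\le m_{k-j+1}-m_j$ is exactly what guarantees the necessary nesting. Once this is in place the lemma follows by the telescoping sum above, with no further computation required.
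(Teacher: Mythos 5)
Your proposal is correct and follows essentially the same route as the paper: the paper's proof of Lemma \ref{l1} is precisely the preceding series of observations (the families of $2^{2m_k-m_{k-j+1}-k}$ disjoint rectangles of area $2^{m_{k-j+1}}$ with average $2^{-k}$, each union having measure $2^{2m_k-k}$, combined with the ``at most half filled'' observation to sum $\frac{1}{2}\cdot\frac{k}{4}\cdot 2^{2m_k-k}$ over the scales $1\le j\le \lceil k/4\rceil$). The halving step you flag as needing care is left at the same level of detail in the paper itself.
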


\section{Proof of Theorem \ref{t1}}

\begin{proof}[Proof of Theorem \ref{t1}]
Let $\mathcal{B}$ be a collection of rectangular parallelepipeds in $\mathbb{R}^3$ whose sides are parallel to the coordinate axes and such that $\mathcal{B}$ contains parallelepipeds with side lengths of the form $s, \frac{2^N}{s} , t $, where $t > 0$ and $S$ is a nonempty set consisting of natural numbers.

If $S$ is a finite set, then the associated geometric maximal operator $M_\mathcal{B}$ is comparable to the maximal operator averaging over rectangular parallelepipeds with side lengths of the form $s, \frac{1}{s}, t$.  In \cite{soria}, Soria showed that this operator maps $L(1 + \log^+ L)(\mathbb{R}^3)$ continuously into weak $L^1(\mathbb{R}^3)$ but does not map any larger Orlicz class into weak $L^1(\mathbb{R}^3)$.  So Theorem \ref{t1} holds in this case.

Suppose now $S$ is an infinite set.   Note that the maximal operator $M_\mathcal{B}$ is dominated by the strong maximal operator in $\mathbb{R}^3$, so the weak type estimate 

$$\left|\left\{x \in \mathbb{R}^3 : M_{\mathcal{B}}f(x) > \alpha\right\}\right| \leq C \int_{\mathbb{R}^3} \frac{|f|}{\alpha} \left(1 + \log^+\frac{|f|}{\alpha}\right)^{2}$$
automatically holds.

 Since $S$ is an infinite set, there exists a subset $\{m_j\}_{j = 1}^\infty$ of $S$ satisfying the condition that
$2m_j \leq m_{j+1}$ for all $j$. So the hypothesis of Lemma \ref{l1} holds for $\{m_j\}_{j=1}^k$ for all $k$.  

For each natural number $k$, we let $Z_k \subset [0, 2^{m_k}] \times [0, 2^{m_k}] \times [0, 2^{k}]$ be defined by 
$$Z_k = Q_{k} \times [0,1]\;.$$

To show the estimate 

$$\left|\left\{x \in \mathbb{R}^3 : M_{\mathcal{B}}f(x) > \alpha\right\}\right| \leq C \int_{\mathbb{R}^3} \phi\left(\frac{|f|}{\alpha}\right)$$ does \emph{not} hold 
for any convex increasing function $\phi: \mathbb[0, \infty) \rightarrow [0, \infty)$ satisfying the condition
$$\lim_{x \rightarrow \infty}\frac{\phi(x)}{x (\log(1 + x))^2} = 0\;,$$
it suffices to show that

$$\mu_3\left(\left\{x \in [0, 2^{m_k}]\times[0,2^{m_k}]\times[0, 2^k]: M_{\mathcal{B}}\chi_{Z_k}(x) \geq 2^{-k}\right\}\right) \geq \frac{1}{32} \frac{k^2}{2^{-k}}\mu_3(Z_k)\;. $$
\\

Fix $1 \leq r \leq k$.  Note that, just as 
$Y_{k}$ is a disjoint union of
$2^{m_{k} - m_r - k + r} $
 copies of $Y_{r}$\;, we have that 
 $Q_{k}$ is a disjoint union of
$2^{2(m_{k} - m_r - k + r)} $
 copies of $Q_{r}$, with each of these copies being contained in pairwise a.e. disjoint squares of sidelength $2^{m_r}$.  By Lemma \ref{l1}, for each one of these squares $\tilde{Q}$,
  $$\mu_2\left(\left\{x \in \tilde{Q}: M_{r} \chi_{\tilde{Q} \cap Q_{k}}(x)  \geq 2^{-r}\right\}\right) \geq \frac{r}{8}2^{2m_r - r}\;.$$  Note each of the rectangles associated to the maximal operator $M_{r}$ has sidelength in the set $\{2^{m_1}, \ldots, 2^{m_r}\} \subset \left\{2^{m_1}, \ldots, 2^{m_k}\right\}$ and hence for any of these rectangles $R$ the associated parallelepiped $R \times [0, 2^{k - r}]$ lies in the basis $\mathcal{B}$.  Note that if 
  $$\frac{1}{\mu_2(R)} \int_R \chi_{\tilde{Q} \cap Q_{k}} \geq 2^{-r}\;,$$
 then
 $$\frac{1}{\mu_3(R \times [0, 2^{k - r}])} \int_{R \times [0, 2^{k - r}]}\chi_{Q_{k} \times [0,1]} \geq 2^{-r}2^{r - k} = 2^{-k}\;.$$
Taking into account only the top half of these parallelepipeds, for any one of the above squares $\tilde{Q}$ we obtain
$$\mu_3\left(\left\{x \in [0, 2^{m_k}]\times[0, 2^{m_k}]\times[2^{k-r-1}, 2^{k-r}] : M_\mathcal{B}\chi_{Z_k}(x) \geq 2^{-k}\right\}\right) \geq $$

$$2^{2(m_{k} - m_r - k + r)} \mu_2\left(\left\{x \in \tilde{Q} : M_{r} \chi_{\tilde{Q} \cap Q_{k}}(x)  \geq 2^{-r}\right\}\right) \cdot 2^{k-r-1}$$

$$\geq 2^{2(m_{k} - m_r - k + r)} \frac{r}{8}2^{2m_r - r} \cdot 2^{k-r-1}\; = \; \frac{r}{16} 2^{2m_k - k}\;.$$

We now take advantage of the fact that, for different values of $r$, the sets \newline \mbox{ $[0, 2^{m_k}] \times[0, 2^{m_k}]\times [2^{k-r-1}, 2^{k-r}] $ }are pairwise a.e. disjoint.  In particular, we have

\begin{align}
&\mu_3\left(\left\{x \in [0, 2^{m_k}]\times[0, 2^{m_k}]\times[0, 2^k]: M_{\mathcal{B}}\chi_{Z_k}(x) \geq 2^{-k}\right\}\right)  \notag \\ & \geq \sum_{r = 1}^k \mu_3\left(\left\{x \in [0, 2^{m_k}]\times[0, 2^{m_k}]\times[2^{k - r - 1}, 2^{k-r}]: M_{\mathcal{B}}\chi_{Z_k}(x) \geq 2^{-k}\right\}\right)\notag \\
& \;\geq  \sum_{r = 1}^k  \frac{r}{16} 2^{2m_k - k} \geq \frac{1}{32} \frac{k^2}{2^k} 2^{2m_k} = \frac{1}{32}\frac{k^2}{2^{-k}}\mu_3(Z_k)\;, \notag
\end{align} as desired.

\end{proof}

\begin{bibsection}
\begin{biblist}

\bib{favacapri}{article}{
author = {O. N. Capri},
author = {N. A. Fava},
journal = {Studia Math.},
volume = {78},
year = {1984},
title = {Strong differentiability with respect to product measures},
pages = {173--178},
review ={\MR{0766713}},
}

\bib{cordoba}{article}{
author = {A. C\'ordoba},
journal = {Harmonic analysis in Euclidean spaces (Proc. Sympos. Pure Math., Williams Coll., Williamstown, Mass., 1978) Part 1},
venue = {Williams Coll., Williamstown, Mass.}
volume = {35},
year = {1979},
title = {Maximal functions, covering lemmas and Fourier multipliers},
pages = {29--50},
review ={\MR{0545237}},
}

\bib{cf1975}{article}{
author = {A. C\'ordoba},
author = {R. Fefferman},
journal = {Ann. of Math.},
volume = {102},
year = {1975},
title = {A geometric proof of the strong maximal theorem},
pages = {95--100},
review={\MR{0379785}},
}

\bib{dm2017}{article}{
author = {E. D'Aniello},
author = {L. Moonens},
journal = {Ann. Acad. Sci. Fenn. Math.},
volume = {42},
year = {2017},
pages = {119--133},
title = {Averaging on $n$-dimensional rectangles},
review = {\MR{3558519}},
}

\bib{guzman1974}{article}{
author = {M. de Guzm\'an},
journal = {Studia Math.},
volume = {49},
year = {1974},
pages = {188--194},
title = {An inequality for the Hardy-Littlewood maximal operator with respect to a product of differentiation bases},
review = {\MR{0333093}},
}

\bib{guzman}{book}{
author = {M. de Guzm\'an},
title = {Differentiation of integrals in $\mathbb{R}^n$},
series = {Lecture Notes in Mathematics},
volume = {481},
publisher = {Springer-Verlag},
year = {1975},
review = {\MR{0457661}},
}

\bib{hs2011}{article}{
author = {P. Hagelstein},
author = {A. Stokolos},
journal = {New York J. Math.},
volume = {17},
year = {2011},
title = {Weak type inequalities for maximal operators associated to double ergodic sums},
pages = {233--250},
review = {\MR{2781915}},
}

\bib{jmz}{article}{
author = {B. Jessen},
author = {J. Marcinkiewicz},
author = {A. Zygmund},
journal = {Fund. Math.},
volume = {25},
title = {A note on differentiability of multiple integrals},
year = {1935},
pages = {217--234},
}

\bib{soria}{article}{
author = {Soria, F.},
journal = {Ann. of Math.},
volume = {123},
title = {Examples and counterexamples to a conjecture in the theory of differentiation of integrals},
year = {1986},
pages = {1--9},
 review={\MR{0825837}},
}

\bib{stokolos1988}{article}{
author = {A. M. Stokolos},
journal = {Studia Math.},
volume = {88},
title = {On the differentiation of integrals of functions from $L \phi(L)$},
year = {1988}, 
pages = {103--120},
review = {\MR{931036}},
}

\bib{stokolos2005}{article}{
author = {A. M. Stokolos},
journal = {Ann. Inst. Fourier (Grenoble)},
title = {Zygmund's program: some partial solutions},
volume = {55},
year = {2005},
pages = {1439--1453}, 
review = {\MR{2172270}},
}

\bib{stokolos2006}{article}{
author = {Stokolos, A. M.},
journal = {Colloq. Math.}
title = {On weak type inequalities for rare maximal functions in $\mathbb{R}^n$},
volume = {104},
year = {2006},
pages = {311--315},
review = {\MR{2197080}},
}

\end{biblist}
\end{bibsection}

\end{document}